\newtheorem{thm}{Theorem}%[section]
\newtheorem{lem}[thm]{Lemma}
\newtheorem{cor}[thm]{Corollary}
\newtheorem{prop}[thm]{Proposition}
\theoremstyle{definition}
\newtheorem{defn}[thm]{Definition}
\newtheorem{exmp}[thm]{Example}
\newtheorem{ques}[thm]{Question}    %!!!!!!!!!!!!!!!!!!!!
\newtheorem{rem}[thm]{Remark}          
\newtheorem*{ack}{Acknowledgments}      % \renewcommand{\theack}{} 
\newtheorem{defn-thm}[thm]{Definition--Theorem}  %!!!!!!!!!!!!!!!!!!!!!!!!
\newtheorem{defn-lem}[thm]{Definition--Lemma}  %!!!!!!!!!!!!!!!!!!!!!!!!
\theoremstyle{remark}
\renewcommand{\c}[0]{{\mathbb C}}  
\renewcommand{\o}[0]{{\mathcal O}} 
\newcommand{\z}[0]{{\mathbb Z}}
\newcommand{\p}[0]{{\mathbb P}}
\newcommand{\pic}[0]{\operatorname{Pic}}
\newcommand{\rank}[0]{\operatorname{rank}}
\newcommand{\sF}{\mathcal{F}}
\def\loccoh#1.#2.#3.#4.{H^{#1}_{#2}(#3,#4)}
\DeclareMathAlphabet{\mathchanc}{OT1}{pzc}%
                                {m}{it}
\begin{document}
\bibliographystyle{amsalpha}

%\today

\title[Ulrich line bundles on Enriques surfaces of degree four]{Ulrich line bundles on Enriques surfaces with a polarization of degree four}
\author{Marian Aprodu}
\address{Faculty of Mathematics and Computer Science, University of Bucharest, 14 Academiei Street, 010014 Bucharest, Romania}
\email{marian.aprodu@fmi.unibuc.ro}
\address{Simion Stoilow Institute of Mathematics of the Romanian Academy, P.O. Box 1-764, 014700 Bucharest, Romania}
\email{marian.aprodu@imar.ro}
\author{Yeongrak Kim}
\address{Max Planck Institut f\"ur Mathematik, Vivatsgasse 7, 53111 Bonn, Germany}
\email{yeongrakkim@mpim-bonn.mpg.de}
\dedicatory{To the memory of Professor Alexandru Lascu}

\begin{abstract}
In this paper, we prove the existence of an Enriques surface with a polarization of degree four with an Ulrich bundle of rank one. As a consequence, we prove that general polarized Enriques surfaces of degree four, with the same numerical polarization class, carry Ulrich line bundles. 
\end{abstract}

\maketitle

%%%%%%%%%%%%%%%%%%%%%%%%%%%%%%%%%%%%%
% Introduction
%%%%%%%%%%%%%%%%%%%%%%%%%%%%%%%%%%%%%

\section{Introduction}

Let $X\subset \mathbb P^N$ be an $n$--dimensional smooth projective variety
and put $H=\mathcal{O}_X(1)$. An {\em Ulrich bundle on $X$} 
(with respect to the given embedding) 
\cite{ESW03} is a vector bundle whose twists satisfy a set of vanishing conditions on cohomology:
\[
H^i (X, E(-iH))=0 \text{ for all } i>0
\]
and
\[
H^j (X, E(-(j+1)H)=0 \text { for all } j<n.
\]
The presence of twists in the definition shows that this notion strongly depends on the embedding in the projective space. The definition makes sense also for an irreducible variety $X$, not necessarily smooth \cite{ESW03}.

Ulrich bundles were introduced in commutative algebra in relation to maximally-generated maximal Cohen-Macaulay modules \cite{Ul84}.  They made a spectacular appearance in algebraic geometry in recent works of Beauville and Eisenbud-Schreyer and their importance is motivated by the relations with the Cayley-Chow forms \cite{Bea00}, \cite{ESW03} and with the cohomology tables \cite{ES11}.  

Let us briefly recall the theory of cohomology tables and how Ulrich bundles appear naturally in 
this context. For any coherent sheaf $\sF$ on the variety $X$, the cohomology table $CT(\sF)$ of $\sF$ is defined as the table 

\begin{center}
\begin{tabular}[c]{c c c c c}
\\
\hline
$\cdots$ & $\gamma_{n,-n-1}$ & $\gamma_{n, -n}$ & $\gamma_{n, -n+1}$ & $\cdots$ \\
$\cdots$ & $\gamma_{n-1,-n}$ & $\gamma_{n-1, -n+1}$ & $\gamma_{n-1, -n+2}$ & $\cdots$ \\
$\cdots$ & $\vdots$ & $\vdots$ & $\vdots$ & $\cdots$ \\
$\cdots$ & $\gamma_{0,-1}$ & $\gamma_{0,0}$ & $\gamma_{0,1}$ & $\cdots$ \\
\hline
\\
\end{tabular}
\end{center}
where $\gamma_{i,j} = h^i (X, \sF (j))$ \cite{ES11}. The cohomology table $CT(\sF)$ is an element of the 
space $\prod_{-\infty}^\infty\mathbb Q^{n+1}$. Varying the sheaf on $X$, since $CT(\sF\oplus \sF')=CT(\sF)+CT(\sF'),$
the rays of these tables define a cone, called  {\em the cone of cohomology tables} and denoted by 
$\mathcal{C}(X, \o_X(1))$. Eisenbud and Schreyer proposed a study of this cone, and obtained a nice 
description in the case of projective spaces \cite{ES11}. In the general case, we observe that a linear 
projection $\pi : X \to \p^n$ induces an injective map $\pi_{*} : \mathcal{C} (X, \o_X(1)) \to \mathcal{C}(\p^n , \o_{\p^n}(1))$. 
If $E$ is a vector bundle on $X$ such that its direct image on $\p^n$ is trivial, then $\pi^{*}( \cdot ) \otimes E$ is an inverse of $\pi_{*}$, and hence $\pi_{*}$ becomes an isomorphism. An application of the Beilinson spectral sequence  and of the Leray spectral sequence for the finite map $\pi$ implies that $\pi_*E$ is trivial if and only if $E$ is Ulrich. One concludes that the cone of cohomology table of $X$ is the same with the cone of the $n$--dimensional projective space if and only if $X$ carries and Ulrich bundle \cite{ES11} and hence the existence problem of Ulrich bundles becomes very important.
From the view-point of the cone of cohomology tables, the rank plays no role in the existence problem, in practice, we try to find Ulrich bundles of the smallest rank possible.

If the given polarization $H$ is a multiple of another polarization $H'$ then the existence of $H'$-Ulrich bundles implies the existence of $H$-Ulrich bundles of much larger rank, \cite[Proposition 5.4]{ESW03}. This phenomenon justifies a straightforward extension of the definition to take into account also polarizations which are not very ample, see Definition \ref{defn:Ulrich}. From the cone of cohomology tables view-point, this generalization is a perfectly legitimate action. It has however some deficiencies, one of them being the possible lack of a geometric interpretation of the existence of Ulrich bundles for polarizations which are not very ample. We give one example here. In rank-two, Eisenbud and Schreyer proposed the notion of {\em special} Ulrich bundles, which are Ulrich bundles on a surface $X$, of determinant $\o_X(K_X+3H)$ ($H$ is considered very ample). There is a prominent merit of the existence of special Ulrich bundles. Via \cite[Corollary 3.4]{ESW03}, a special Ulrich bundle provides a very nice presentation of the Cayley-Chow form of $X$. Indeed, $X$ admits a Pfaffian B{\'e}zout form in Pl{\"ucker} coordinates. If the polarization is not very ample, the Cayley-Chow form might not even exist.

In this short note, we investigate Ulrich bundles on Enriques surfaces with a polarization of degree four. Note that since a degree-4 polarization gives a $4:1$ map to $\p^2$, it is obviously not very ample, and hence our setup should be interpreted in the extended context of ample (not very ample) polarizations. We prove that there are Enriques surfaces with polarizations of degree four which carry Ulrich line bundles. We denote by $\sF_5$ the moduli space of polarized $K3$ surfaces of degree 8. The locus
\[
\mathcal{NL}_{7,12}:= \{ (X, H_X) \in\mathcal F_5\ \vert \ \exists \ M \in \pic(X) \text{ with } H_X \cdot M = 12, M^2 = 12 \} \subset \sF_{5}
\]
is an irreducible component of the Noether-Lefschetz locus in $\mathcal F_5$ and
\[
\mathcal{U} := \{ (X, H_X) \ \vert \ \exists \ H_X \text{-Ulrich line bundle } M \}
\]
is an open subset in $\mathcal{NL}_{7,12}$. The locus of polarized $K3$ surfaces which cover Enriques surfaces can be described as
\[
\mathcal{K} := \{ (X, H_X) \ \vert \ \exists \ \theta : X \to X \text{ fixed-point-free involution such that } H_X \simeq \theta^* H_X \}.
\]
In the main result, Theorem \ref{Thm:ExistenceOfUlrichLineBundles}, we show that the intersection $\mathcal U\cap\mathcal K$ is non-empty. The proof is completed with the help of the Macaulay2 computer-algebra system. Moreover, the line bundle that we construct is a pullback of a line bundle from the Enriques surface, which turns out to be Ulrich, too. 

The outline of the paper is the following. In section \ref{sec:prel} we recall a few facts related to Ulrich bundles and on the geometry of Enriques surfaces. In section \ref{sec:UlrichonEnriques} we prove the existence of an Enriques surface with a polarization of degree four with an Ulrich bundle of rank one. As already mentioned, the construction uses the existence of an Ulrich line bundle on its $K3$ cover. As a consequence, we prove that a general polarized Enriques surface of degree four with the same numerical polarization carries an Ulrich line bundle, Corollary \ref{cor:general}.

\section{Preliminaries}
\label{sec:prel}

\subsection{Ulrich bundles} In this section we briefly review the definitions and properties of Ulrich bundles. We extend slightly the setup, to allow polarizations which are not very ample:

\begin{defn}[compare to \cite{ESW03}, Proposition 2.1]
\label{defn:Ulrich}
Let $X$ be a projective variety of dimension $n$ and $H$ be an ample and globally generated line bundle on $X$.  We say that a vector bundle $E$ on $X$ is $H$-\emph{Ulrich} (or \emph{Ulrich with respect to $H$}) if
\[
H^i (X, E(-iH))=0 \text{ for all } i>0
\]
and
\[
H^j (X, E(-(j+1)H)=0 \text { for all } j<n.
\]
\end{defn}

\begin{rem}
\label{rmk:pushforward}
With this definition, if $Y$ is the image of $X$ in $\mathbb PV^*$ via the morphism $\varphi$ given by a linear system corresponding to a space $V\subset H^0(X,H)$ which generates $H$, a bundle $E$ is Ulrich with respect to $H$ if and only if $\varphi_*E$ is Ulrich on $Y\subset \mathbb PV^*$.
\end{rem}

In \cite{ESW03}, the original definition assumes that the polarization is very ample. The potential of this extended definition is underlined by the following result, see \cite[Proposition 5.4 and Corollary 5.7]{ESW03}

\begin{prop}\label{prop:UlrichForduple}
Let $\varphi : X \to \p^n$ be a finite morphism and denote $H_X=\varphi^* \o_{\p^n}(1)$.
If $X$ carries an Ulrich bundle $E$ with respect to $H_X$, then $X$ carries an Ulrich bundle with respect to $d H_X$ for any integer $d>0$.
\end{prop}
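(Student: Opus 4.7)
The plan is to reduce the existence of $dH_X$-Ulrich bundles on $X$ to the existence of Ulrich bundles on $\p^n$ with respect to the polarization $\o_{\p^n}(d)$, using the finite morphism $\varphi$ and the projection formula.

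First, by Remark \ref{rmk:pushforward} the hypothesis that $E$ is $H_X$-Ulrich means precisely that $\varphi_* E$ is an Ulrich sheaf on $\p^n$ with respect to $\o_{\p^n}(1)$. It is classical (and essentially the starting observation in \cite{ES11}) that every such sheaf on $\p^n$ is in fact a trivial vector bundle: the Ulrich vanishings force the Beilinson spectral sequence of $\varphi_*E$ to degenerate. Hence we may write $\varphi_* E \cong \o_{\p^n}^{\oplus r}$ for some positive integer $r$.

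Next, I would invoke the existence of an Ulrich bundle $T$ on $\p^n$ with respect to $\o_{\p^n}(d)$, and set $F := E \otimes \varphi^* T$ on $X$. Because $\varphi$ is finite, pushforward along $\varphi$ is exact and preserves cohomology; combined with the projection formula and the previous step this gives
\[
\varphi_*\bigl(F \otimes \varphi^*\o_{\p^n}(-kd)\bigr) \cong (\varphi_* E) \otimes T(-kd) \cong T(-kd)^{\oplus r}
\]
for every integer $k$, and therefore
\[
H^i\bigl(X,\, F(-k\cdot dH_X)\bigr) \cong H^i\bigl(\p^n,\, T(-kd)\bigr)^{\oplus r}
\]
for every $i \geq 0$ and every $k\in \z$. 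Specializing $k=i$ and $k=i+1$, the Ulrich conditions for $F$ on $X$ with respect to $dH_X$ follow immediately from the Ulrich conditions for $T$ on $\p^n$ with respect to $\o_{\p^n}(d)$.

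The main obstacle is the second step, namely producing an Ulrich bundle on $\p^n$ with respect to $\o_{\p^n}(d)$. This is however a classical fact: such a $T$ can be built explicitly from a Koszul-type complex attached to the $d$-uple Veronese embedding $v_d \colon \p^n \hookrightarrow \p^N$ with $N = \binom{n+d}{d}-1$, which is recorded in the Ulrich-bundle literature. Granting this, the proposition follows.
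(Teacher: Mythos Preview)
Your argument is correct and is precisely the approach of \cite[Proposition~5.4 and Corollary~5.7]{ESW03}, to which the paper defers rather than giving its own proof. The three ingredients you isolate --- triviality of $\varphi_*E$ on $\p^n$, existence of an Ulrich bundle $T$ on $\p^n$ for the $d$-uple Veronese polarization, and the tensor-with-$\varphi^*T$ trick combined with the projection formula and the vanishing of higher direct images along the finite map $\varphi$ --- are exactly what the cited source uses.
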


The existence of Ulrich bundles with respect to multiples of $H_X$ is hence ensured by the existence of $H_X$-Ulrich bundles, however, the ranks might differ drastically.

Most of the cases known to carry Ulrich bundles in the classical definition continue to have Ulrich bundles also in this extended framework. We discuss below, in Example \ref{ex:curve}, the curve case which is identical with \cite{ESW03}. 

\begin{exmp}\label{ex:curve}
 If $X$ is a curve of genus $g$, $H$ is an ample and globally generated line bundle on $C$, and $L$ is an arbitrary 
 line bundle, then $L$ is $H$-Ulrich if and only if $\mathrm{deg}(L-H)=g-1$ and $h^0(L-H)=0$. Hence a
 general line bundle of degree $\mathrm{deg}(H)+g-1$ will be $H$-Ulrich.
\end{exmp}

In the sequel, we work on a projective surface $S$.

\begin{defn}[\cite{ESW03}]
Let $S$ be a projective surface and $H$ be an ample and globally generated line bundle on $S$. A vector bundle $E$ of rank 2 is called \emph{special Ulrich} if it is $0$-regular with respect to $H$ and $\det E = \o_S(K_S + 3H)$. 
\end{defn}

One can check immediately that a special Ulrich bundle is Ulrich. Also note that any $H$-Ulrich bundle $E$ on $S$ satisfies
\[
H \cdot \left( c_1(E) - \frac{\rank(E)}{2}(K_S + 3H)\right) = 0
\]
since $\chi(E(-H)) = \chi(E(-2H))=0$, \cite{AFO12}. Hence, special Ulrich bundles are the simplest vector bundles of rank 2 which satisfy the above identity. 
Eisenbud and Schreyer proved the following structure result:
\begin{prop}\cite[Proposition 6.2]{ESW03}
\label{prop:ESW}
Let $C \in |K_S + 3H|$ be a smooth curve on $S$ and let $A$ be a line bundle on $C$ with 
\[
\deg A = \frac{5}{2}H^2 + \frac{3}{2}(K_S \cdot H) + 2 \chi(\o_S).
\]
If $\sigma_0, \sigma_1 \in H^0 (A)$ define a base point free pencil and $H^1(C, A(K_S + H)) = 0$, then the bundle $E$ defined by the Lazarsfeld-Mukai sequence
\[
0 \to E^{\vee} \to \o_S^{\oplus 2} \stackrel{(\sigma_0, \sigma_1)} \longrightarrow A \to 0
\]
is a special Ulrich bundle. Conversely, every special rank 2 Ulrich bundle on $S$ can be obtained from a Lazarsfeld-Mukai sequence. 
\end{prop}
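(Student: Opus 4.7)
The plan is to verify the three defining conditions of a special rank-two Ulrich bundle on $S$: that $E$ is locally free of rank two, that $\det E=\o_S(K_S+3H)$, and that $E$ is $0$-regular with respect to $H$. The first two are read off from the Lazarsfeld-Mukai sequence itself. Since the pencil $(\sigma_0,\sigma_1)$ is base-point-free, the map $\o_S^{\oplus 2}\to i_*A$ is surjective, and because $i_*A$ has projective dimension one on $S$, the kernel $E^\vee$ is locally free of rank two. Grothendieck-Riemann-Roch yields $\operatorname{ch}(i_*A)=[C]+(\deg A-\tfrac{1}{2}C^2)[\operatorname{pt}]$, whence $c_1(E)=[C]=K_S+3H$ and $\det E=\o_S(K_S+3H)$.

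Once $\det E$ is fixed, Serre duality on $S$ gives the identifications $H^j(E(-(j+1)H))\cong H^{2-j}(E(-(2-j)H))^\vee$, so the four Ulrich vanishings pair up and it suffices to show $H^i(E(-2H))=0$ for every $i$. Twisting the Lazarsfeld-Mukai sequence by $\o_S(K_S+H)$, and noting that $E^\vee(K_S+H)=E(-2H)$, produces
\[
0\to E(-2H)\to \o_S(K_S+H)^{\oplus 2}\to i_*\bigl(A\otimes\o_C(K_S+H)\bigr)\to 0.
\]
Kodaira vanishing forces $H^{\ge 1}(\o_S(K_S+H))=0$, so $H^2(E(-2H))=0$ drops out of the long exact sequence using the hypothesis $H^1(C,A(K_S+H))=0$.

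For $H^0(E(-2H))=0$, I would analyze the kernel $K$ of $H^0(\o_S(K_S+H))^{\oplus 2}\to H^0(A(K_S+H))$, namely pairs $(s_0,s_1)$ with $s_0|_C\sigma_0+s_1|_C\sigma_1=0$. Restriction to $C$ is injective because $H^0(\o_S(-2H))=0$, so $K$ embeds into the kernel of the analogous map on $C$; by the Lazarsfeld-Mukai sequence $0\to A^{-1}\to\o_C^{\oplus 2}\to A\to 0$ twisted by $\o_C(K_S+H)$, this kernel equals $H^0(A^{-1}(K_S+H)|_C)$, and Serre duality on $C$ identifies it with $H^1(A(K_S+2H)|_C)^\vee$. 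Slicing by a section of $\o_C(H|_C)$ realizes $H^1(A(K_S+2H))$ as a quotient of $H^1(A(K_S+H))=0$, so $K=0$. A Riemann-Roch computation on $S$ shows that the specified value of $\deg A$ is calibrated precisely so that $\chi(E(-2H))=0$; combined with $h^0=h^2=0$, this forces $H^1(E(-2H))=0$, completing the forward direction.

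For the converse, Riemann-Roch applied to a special rank-two Ulrich bundle gives $\chi(E)=2H^2>0$, so two general sections $s_0,s_1\in H^0(E)$ produce a wedge $s_0\wedge s_1\in H^0(\det E)=H^0(K_S+3H)$ whose zero locus is the curve $C$; the cokernel of $\o_S^{\oplus 2}\to E$ is the pushforward of a line bundle on $C$, and dualizing recovers the Lazarsfeld-Mukai presentation with the prescribed $A$. The main obstacle I anticipate is the $H^0(E(-2H))=0$ step, which chains two exact sequences on $C$ with Serre duality and hinges on converting the hypothesis $H^1(A(K_S+H))=0$ into $H^1(A(K_S+2H))=0$ by slicing with a hyperplane section; the other vanishings then follow cleanly from Kodaira vanishing, Riemann-Roch, and the Serre-duality pairing fixed by $\det E$.
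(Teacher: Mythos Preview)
The paper does not give its own proof of this proposition: it is quoted verbatim from \cite[Proposition~6.2]{ESW03} and used as a black box. So there is no ``paper's proof'' to compare against; your proposal is an independent verification of the cited result.

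Your argument is correct. The identification $E^\vee(K_S+H)\cong E(-2H)$ coming from $\det E=\o_S(K_S+3H)$ is the right pivot, and it reduces the Ulrich vanishings to the single twist $E(-2H)$. The $H^2$ vanishing via Kodaira and the hypothesis $H^1(C,A(K_S+H))=0$ is clean. For $H^0$, your chain
\[
H^0(E(-2H))\hookrightarrow H^0\bigl(A^{-1}\otimes\o_C(K_S+H)\bigr)\cong H^1\bigl(A\otimes\o_C(K_S+2H)\bigr)^\vee
\]
is valid: the sequence $0\to A^{-1}\to\o_C^{\oplus 2}\to A\to 0$ holds on $C$ simply because the kernel of a surjection from a rank-two trivial bundle onto a line bundle is its inverse, independently of how $E^\vee$ restricts to $C$. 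The ``slicing'' step uses only that $H|_C$ has a nonzero section, which follows from $H$ being globally generated. The Riemann--Roch check that the prescribed $\deg A$ makes $\chi(E(-2H))=0$ then closes the loop.

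For the converse, you should note that an Ulrich bundle is $0$-regular, hence globally generated; this is what guarantees that two general sections $s_0,s_1$ drop rank along a genuine curve rather than having a common zero, and that the cokernel is a line bundle on that curve. With that remark added, the sketch is complete.
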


The bundles $E$ from the proposition are called {\em Lazarsfeld-Mukai} bundles. They have
been defined first on $K3$ surfaces \cite{La86, Mu89} and they are connected with several 
distinct problems involving curves on $K3$ surfaces: Brill-Noether theory, classification of Fano
varieties, syzygies etc. They are very natural and interesting objects with applications in several problems.
In our specific situation, we see that the Lazarsfeld-Mukai bundles with given Chern classes are the only candidates 
to be special Ulrich, in view of Proposition \ref{prop:ESW}.

\begin{exmp}\label{ex:K3}
 Assume $S$ is a $K3$ surface and $H_S$ be a very ample line bundle on $S$. In \cite[Theorem 0.4]{AFO12}, the existence of special Ulrich bundles on $K3$ surfaces satisfying a mild Brill-Noether condition 
 is proved. Specifically, it is required that the Clifford index of a
 general cubic section be computed by $H_S$. A $K3$ surface whose
 Picard group is generated by $H_S$ automatically satisfies this
 hypothesis. In \cite{AFO12}, $H_S$  was supposed to be very
 ample. However, the exactly same proof goes through even if we only assume that $H_S$ is ample and globally generated.

As noted in \cite{AFO12}, the sufficient Brill-Noether condition on $K3$ surfaces is used only to ensure the existence of a base-point-free pencil of degree $\frac{5}{2}H_S^2+4$ on the cubic sections. However, there are cases not covered by this Brill-Noether condition and which still carry Ulrich bundles, and even special Ulrich bundles.
\end{exmp}

\begin{exmp}\label{ex:Enriques}
 If $S$ is an Enriques surface, and $H_S$ is ample and globally generated, the existence of special Ulrich bundles on $S$ was proved in \cite{Bea16}. In loc.cit. it was assumed that $H_S$ is very ample, however, the proof goes through even under weaker assumptions.
In some cases, $S$ carries Ulrich line bundles \cite{BN16}. Borisov and Nuer conjectured that this should always be the case, for very ample polarizations on unnodal Enriques surfaces.
\end{exmp}

\section{Construction of Ulrich bundles using $K3$ covers}
\label{sec:UlrichonEnriques}

In this section we describe how we obtain an Ulrich bundle on an Enriques surface from its $K3$ cover. Let $Y$ be an Enriques surface and $H_Y$ be an ample and globally generated line bundle on $Y$. It admits an {\'e}tale  $K3$ cover, namely $\sigma : X \to Y$ such that $X$ is a $K3$ surface with a fixed-point-free involution $\theta : X \to X$ which induces $\sigma : X \to X/\theta \simeq Y$. 

Recall from Remark \ref{rmk:pushforward} that if there is an Ulrich bundle $E$ on $X$ with respect to $H_X := \sigma^* H_Y$, 
then its push-forward $F = \sigma_{*}E$ is an Ulrich bundle of rank $2 \cdot rk(E)$ on $(Y, H_Y)$. The main goal of this section is to construct an Ulrich line bundle on a particular $(Y, H_Y)$ occurring as a direct summand of the push-forward of an Ulrich line bundle $M$ on $X$ by $\sigma$.

It is natural to ask which polarized $K3$ surfaces $(X, H_X)$ carry an $H_X$-Ulrich line bundle $M$ equipped with a fixed-point-free involution $\theta$. We divide into smaller questions as follows:

\begin{ques} \
\begin{enumerate}
\item Which $K3$ surface $X$ can cover an Enriques surface $Y$?
\item Given such a covering $\sigma : X \to Y$, which $H_X$ can be described as the pull-back of an ample and globally generated line bundle $H_Y$ on $Y$?
\item Which polarized $K3$ surfaces $(X, H_X)$ carry Ulrich line bundles?
\end{enumerate}
\end{ques}

There is a very nice criterion in \cite{Keu90} which answers the first question. It gives a whole class of $K3$ covers. In this paper, we only use a weaker result:
%Instead of using the whole criterion, here we only take a nice class of $K3$ surfaces, namely Kummer surfaces. 

\begin{thm}\cite[Theorem 2]{Keu90}
Every algebraic Kummer surface is the $K3$-cover of some Enriques surface.
\end{thm}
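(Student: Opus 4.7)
I would approach this via the global Torelli theorem for $K3$ surfaces combined with lattice theory, following the Nikulin--Morrison circle of ideas. The key reduction is that an Enriques involution on a $K3$ surface $X$ (i.e.\ a fixed-point-free involution) corresponds, by Torelli, to a Hodge isometry $\theta^\ast$ of $L := H^2(X,\mathbb Z) \cong U^{\oplus 3}\oplus E_8(-1)^{\oplus 2}$ such that $\theta^\ast$ acts as $-1$ on the transcendental lattice $T_X$ and the invariant sublattice $L^{\theta^\ast}$ is isometric to the Enriques lattice $M := U(2)\oplus E_8(-2)$ and sits inside $NS(X)$. Fixed-point-freeness is equivalent to the additional requirement that no $(-2)$-class of $L$ is contained in the anti-invariant lattice $L^{-\theta^\ast}$ (equivalently, no $(-2)$-class of $NS(X)$ lies in the anti-invariant part), so that no smooth rational curve is flipped onto itself.

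The plan is then as follows. First, recall the structure of $NS(\operatorname{Km}(A))$ for an algebraic abelian surface $A$: it contains primitively the Kummer lattice $\Pi$ of rank $16$ (generated, up to divisibility, by the $16$ exceptional $(-2)$-curves from the blow-up of the $2$-torsion) together with the push-forward image of $NS(A)$, suitably rescaled by $2$. Because $A$ is algebraic, $NS(A)$ has rank at least one, so $NS(\operatorname{Km}(A))$ has rank at least $17$. Second, using Nikulin's existence criteria for primitive embeddings of even lattices into the $K3$ lattice (based on the discriminant form), I would produce a primitive embedding $M = U(2)\oplus E_8(-2) \hookrightarrow NS(\operatorname{Km}(A))$ compatible with the Hodge structure, exploiting the fact that $\Pi$ and the Kummer construction produce many $(-2)$-isotropic vectors and a rich discriminant form.

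Third, having such an embedding, extend $\mathrm{id}_M \oplus (-\mathrm{id}_{M^\perp})$ to a Hodge isometry $\theta^\ast$ of $L$ that equals $-1$ on $T_X \subset M^\perp$; by construction this is an involution with invariant lattice exactly $M$. Then verify (this is the delicate combinatorial check) that no $(-2)$-class of $L$ lies in the anti-invariant part $M^\perp$. This is the heart of the matter: it uses specific features of the Kummer lattice and of $M$, namely that $U(2)\oplus E_8(-2)$ has no $(-2)$-vectors and that the orthogonal complement, after intersecting with $NS(\operatorname{Km}(A))$, can be arranged to be $(-2)$-class free by a clever choice of embedding. Fourth, apply the global Torelli theorem (combined with the standard argument that one can further compose with elements of the Weyl group to place $\theta^\ast$ in the effective chamber) to conclude that $\theta^\ast$ is induced by a unique automorphism $\theta\in\operatorname{Aut}(X)$, and deduce that $\theta$ is a fixed-point-free involution.

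The main obstacle I expect is the lattice-theoretic step of constructing the embedding $M\hookrightarrow NS(\operatorname{Km}(A))$ with the $(-2)$-class-free complement. In general, this requires a careful case analysis according to the possibilities for $NS(A)$ and its discriminant form, and in Keum's paper this is carried out by explicit lattice-theoretic computations exploiting the symmetries of the $16$ nodes; I would attempt the uniform case first (when the discriminant form of $NS(A)$ is sufficiently generic to apply Nikulin's surjectivity criterion) and then handle the exceptional configurations of Picard ranks by hand.
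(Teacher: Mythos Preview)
The paper does not prove this theorem; it is quoted verbatim as \cite[Theorem 2]{Keu90} and used as a black box to guarantee that Jacobian Kummer surfaces cover Enriques surfaces. There is therefore no proof in the paper to compare your proposal against.

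That said, your outline is essentially Keum's original strategy, so a brief remark on accuracy is in order. The broad architecture (Torelli, primitive embedding of the Enriques lattice $M=U(2)\oplus E_8(-2)$ into $NS(\operatorname{Km}(A))$, then realizing $\mathrm{id}_M\oplus(-\mathrm{id}_{M^\perp})$ as an automorphism) is right. However, your formulation of the fixed-point-freeness condition is not quite correct: the anti-invariant lattice $M^\perp\subset L$ is isometric to $U\oplus U(2)\oplus E_8(-2)$ and certainly does contain $(-2)$-vectors. Keum's actual criterion is that $M^\perp\cap NS(X)$ contain no $(-2)$-vector; this is what guarantees that the candidate isometry preserves the ample cone (so Torelli applies without destroying the involution by Weyl reflections), and the identification $L^{\theta^\ast}\cong U(2)\oplus E_8(-2)$ then forces the involution to be of Enriques type by the classification of non-symplectic involutions. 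The delicate lattice-theoretic step you anticipate is precisely this: embedding $M$ primitively in $NS(\operatorname{Km}(A))$ so that the complement \emph{inside $NS$} is $(-2)$-free, and Keum does this by an explicit construction exploiting the Kummer lattice rather than by a case analysis on $NS(A)$.
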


Horikawa's theorem answers the second question. We follow the notation in \cite{Keu90}. Let $\Lambda = U \oplus U \oplus U \oplus E_8 (-1) \oplus E_8 (-1)$ be the $K3$ lattice. We choose a basis of $\Lambda$ of the form $v_1, v_2, v_1^{\prime}, v_2^{\prime}, v_1^{\prime \prime}, v_2^{\prime \prime}, e_1^{\prime}, \cdots, e_8^{\prime}, e_1^{\prime \prime}, \cdots, e_8^{\prime \prime}$ where the first 3 pairs are the standard bases of $U$ and the remaining 2 octuples are the standard bases of $E_8(-1)$. There is an involution $\vartheta : \Lambda \to \Lambda$  given by
\[
\vartheta(v_i) = -v_i, \vartheta(v_i^{\prime}) = v_i^{\prime \prime}, \vartheta(v_i^{\prime \prime}) = v_i^{\prime}, \vartheta(e_i^{\prime}) = e_i^{\prime \prime}, \vartheta(e_i^{\prime \prime}) = e_i^{\prime}.
\]
We denote the $\vartheta$-invariant sublattice by $\Lambda^{+} \simeq U(2) \oplus E_8 (-2)$. Recall that the covering map $\sigma : X \to Y$ is determined by the choice of a fixed-point-free involution $\theta : X \to X$. 

\begin{thm} \cite[I, Theorem 5.4]{Hor78}
There is an isometry $\phi : H^2 (X, \z) \to \Lambda$ such that the following diagram

\[
\xymatrix{ H^2 (X, \z) \ar[r]^{\theta^*} \ar[d]_\phi & H^2(X, \z) \ar[d]^\phi \\
\Lambda \ar[r]^{\vartheta} & \Lambda
}
\]
commutes. In particular, $\phi$ induces an isomorphism
\[
\bar{\phi} : H^2(X, \z)^{\theta^*} = \sigma^* H^2(Y, \z) = \sigma^* \pic(Y) \to \Lambda^{+}.
\]
\end{thm}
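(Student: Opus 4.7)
The plan is to reduce the theorem to a purely lattice-theoretic problem: since $X$ is a $K3$ surface, the pair $(H^2(X,\z), \theta^*)$ consists of an even unimodular lattice of signature $(3,19)$ equipped with an involution, and likewise for $(\Lambda, \vartheta)$. The goal is to find an isometry identifying the two pairs. I would proceed in two stages. First, I would give a concrete description of the invariant sublattice $T^+ := H^2(X,\z)^{\theta^*}$ and show it is abstractly isometric to $\Lambda^+$. Second, I would invoke Nikulin's theory of primitive embeddings into unimodular lattices to promote an abstract isometry $T^+ \simeq \Lambda^+$ to an equivariant isometry $H^2(X,\z) \to \Lambda$.

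For the first stage, I would exploit the fact that $\sigma: X \to Y$ is \'etale of degree $2$. The standard transfer relations $\sigma_* \sigma^* = 2 \cdot \mathrm{id}$ on $H^2(Y,\z)$ and $\sigma^* \sigma_* = 1 + \theta^*$ on $H^2(X,\z)$ show that $\sigma^* H^2(Y,\z)$ lands in $T^+$. Any $2$-torsion class in $H^2(Y,\z)$ pulls back to a $2$-torsion class in the torsion-free lattice $H^2(X,\z)$, hence to zero, so $\sigma^*$ factors through $H^2(Y,\z)_{\mathrm{free}} \simeq U \oplus E_8(-1)$, and the projection formula gives $(\sigma^*\alpha, \sigma^*\beta) = 2(\alpha,\beta)$. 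Therefore $\sigma^* H^2(Y,\z)_{\mathrm{free}}$ is isometric to $U(2) \oplus E_8(-2) = \Lambda^+$. A rank computation based on the Lefschetz fixed-point theorem for the free involution $\theta$ (using $\chi_{\mathrm{top}}(X) = 24$ and $\chi_{\mathrm{top}}(Y) = 12$) gives $\rank(T^+) = 10$, matching the rank of the image, and the identity $\sigma^* \sigma_* = 1 + \theta^* = 2$ on $T^+$ forces $\sigma^* H^2(Y,\z)_{\mathrm{free}}$ to be primitive in $T^+$. Together these upgrade the rank equality to $T^+ = \sigma^* H^2(Y,\z)_{\mathrm{free}}$.

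For the second stage, I would apply Nikulin's classification of primitive embeddings of an even lattice into an even unimodular lattice. The critical input is that $\Lambda^+$ is $2$-elementary, with a readily computable discriminant form, and its primitive embedding into an even unimodular lattice of signature $(3,19)$ is unique up to the action of the orthogonal group of the ambient lattice, provided the genus of the orthogonal complement contains a single class. Granting this, the abstract isometry $T^+ \simeq \Lambda^+$ from stage one extends to an isometry $\phi: H^2(X,\z) \to \Lambda$. Since each involution is the identity on its invariant sublattice and is reconstructed from this sublattice together with its orthogonal complement, the extension $\phi$ automatically intertwines $\theta^*$ and $\vartheta$, and the ``in particular'' assertion is simply the restriction of $\phi$ to $T^+$.

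The hard step is the Nikulin-uniqueness input: one must verify that the genus of the orthogonal complement of $\Lambda^+$ in $\Lambda$ consists of a single class, and that no nontrivial automorphism of the discriminant form obstructs the gluing. These are delicate but standard discriminant-form calculations; once they are in hand, the rest of the argument is formal.
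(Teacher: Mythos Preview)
The paper does not prove this theorem; it is quoted from Horikawa \cite[I, Theorem~5.4]{Hor78} and used as a black box, so there is no argument in the paper to compare against.

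Your outline is the standard modern route to Horikawa's result and is sound, with one step that deserves tightening. In Stage~1, the identity $\sigma^*\sigma_* = 1+\theta^* = 2$ on $T^+$ only yields $2T^+ \subset \sigma^* H^2(Y,\z)$, i.e.\ that the quotient $T^+/\sigma^* H^2(Y,\z)$ is annihilated by~$2$; it does not by itself force that quotient to vanish, so it does not prove primitivity. The clean way to close this gap is the Cartan--Leray (Hochschild--Serre) spectral sequence for the free $\z/2$-action: using $H^1(X,\z)=0$ one checks that the edge map $H^2(Y,\z)\to E_\infty^{0,2}=H^2(X,\z)^{\theta^*}$ is surjective, and this edge map is exactly~$\sigma^*$. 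In Stage~2 your reasoning is correct: once $\phi$ carries $T^+$ to $\Lambda^+$ it carries $(T^+)^\perp$ to $(\Lambda^+)^\perp$, both involutions act as $\pm1$ on these pieces, and two $\z$-linear endomorphisms of a torsion-free group that agree on the finite-index sublattice $T^+\oplus T^-$ must agree everywhere. The Nikulin inputs you flag---uniqueness in its genus of the complement $U\oplus U(2)\oplus E_8(-2)$ and surjectivity of $O(\Lambda^-)\to O(q_{\Lambda^-})$---are covered by the classification of indefinite $2$-elementary lattices, so no obstruction is hiding there.
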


Hence, Horikawa's theorem implies that a divisor (equivalently, a line bundle) which is invariant under $\theta$ can be obtained by the pull-back of a divisor on $Y$, and vice versa.

For the last question, there are some numerical conditions which filter out most of line bundles. Let $X$ be a $K3$ surface and $H_X$ be an ample and globally generated line bundle with $H_X^2=2s>0$. If there is an Ulrich line bundle $M$ with respect to $H_X$, it must satisfy 
$\chi(M - H_X) = \chi(M - 2H_X) = 0$. Using the Riemann-Roch formula we have
\[
H_X \cdot \left(\frac{3}{2} H_X - M \right) = 3s - (H_X \cdot M) = 0,
\]
so $(H_X \cdot M) = 3s$. Since $M$ is Ulrich, applying Riemann-Roch formula once again yields
\[
2 + \frac{M^2}{2} = \chi(M) = h^0 (M) = \deg(X) \cdot \rank(M) = 2s,
\]
so $M^2 = 4s-4$. Note that not all line bundles $M$ with $(M \cdot H_X) = 3s$, $M^2 = 4s-4$ are Ulrich, see Remark \ref{Rem:NonUlrichLineBundle}.

\begin{lem}\label{Lemma:UlrichLineBundleCondition}

Let $X$ be a $K3$ surface, and $H_X$ be an ample and globally generated line bundle with $H_X^2 = 2s>0$. 
Let $M$ be a line bundle on $X$ with $H _X\cdot M = 3s$ and $M^2 = 4s-4$. $M$ is an Ulrich line bundle with respect to $H_X$ if and only if both corresponding divisors $M - H_X$ and $2H_X - M$ are not effective.
\end{lem}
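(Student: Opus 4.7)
The plan is to unpack Definition \ref{defn:Ulrich} in the surface case $n=2$ for a line bundle $M$, and show that the four vanishing conditions collapse to exactly the two effectivity conditions in the statement, using Serre duality, Riemann--Roch on a K3, and the ampleness of $H_X$.

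Write out the Ulrich conditions explicitly: they are
\[
H^0(M-H_X)=0,\quad H^1(M-H_X)=0,\quad H^1(M-2H_X)=0,\quad H^2(M-2H_X)=0.
\]
Using $K_X=0$ and Serre duality, $H^2(M-2H_X)\cong H^0(2H_X-M)^{\vee}$, so the fourth condition is literally ``$2H_X-M$ not effective''; likewise the first is ``$M-H_X$ not effective''. It remains to argue that, under the numerical assumptions $H_X\cdot M=3s$ and $M^2=4s-4$, the middle two $H^1$-vanishings are \emph{automatic} consequences of the outer two. This is the content of the lemma.

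First I would compute, via Riemann--Roch $\chi(L)=2+L^2/2$ on the K3 surface, that
\[
\chi(M-H_X)=2+\tfrac{1}{2}(M^2-2H_X\cdot M+H_X^2)=2+\tfrac{1}{2}(4s-4-6s+2s)=0,
\]
and similarly $\chi(M-2H_X)=2+\tfrac{1}{2}(4s-4-12s+8s)=0$. Next I would use ampleness of $H_X$ to kill two further $H^0$'s: since $H_X\cdot(H_X-M)=2s-3s=-s<0$ and $H_X\cdot(M-2H_X)=3s-4s=-s<0$, neither $H_X-M$ nor $M-2H_X$ can be effective (any nonzero effective divisor has strictly positive intersection with $H_X$). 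Via Serre duality this gives $H^2(M-H_X)=0$ and $H^0(M-2H_X)=0$.

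Substituting these vanishings into the two Euler-characteristic identities, the first yields $h^1(M-H_X)=h^0(M-H_X)$, so the second Ulrich condition is equivalent to the first; the second yields $h^1(M-2H_X)=h^2(M-2H_X)=h^0(2H_X-M)$, so the third Ulrich condition is equivalent to the fourth. Thus the full set of Ulrich vanishings reduces precisely to the non-effectivity of $M-H_X$ and of $2H_X-M$, which is the claim. I do not anticipate a serious obstacle; the only thing to keep straight is the direction of Serre duality and the sign bookkeeping when invoking $H_X$-ampleness to rule out the two ``backward'' effective divisors.
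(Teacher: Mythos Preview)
Your argument is correct and follows essentially the same route as the paper: both compute $\chi(M-H_X)=\chi(M-2H_X)=0$ by Riemann--Roch, use ampleness of $H_X$ together with Serre duality (with $K_X=0$) to kill $h^2(M-H_X)$ and $h^0(M-2H_X)$, and then reduce the four Ulrich vanishings to the two non-effectivity conditions via the resulting equalities $h^0(M-H_X)=h^1(M-H_X)$ and $h^1(M-2H_X)=h^2(M-2H_X)=h^0(2H_X-M)$.
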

\begin{proof}
It is enough to show that the 4 cohomology groups 
\[
H^0(X, M(-H_X)), H^1(X, M(-H_X)), H^1(X, M(-2H_X)), H^2(X,M(-2H_X))
\]
 vanish simultaneously. By Riemann-Roch formula, $\chi(M-H_X) = 2 + \frac{1}{2} (M - H_X)^2 = 0$ and similarly $\chi(M-2H_X) = 0$. Since $H_X$ is ample and $H_X \cdot (H_X - M) = -s < 0$, $H_X - M$ cannot be effective, that is, $h^0(X,H_X - M) = h^2 (X,M - H_X) = 0$. Similarly, we see that $M-2H_X$ is not effective, so $h^0(X,M-2H_X) = 0$. So we have 2 equalities
\begin{eqnarray*}
h^0(X, M-H_X) & = & h^1 (X, M-H_X) \\
h^1(X, M-2H_X) & = & h^2 (X, M- 2H_X).
\end{eqnarray*}
Since $h^2(X, M-2H_X) = h^0 (X, 2H_X - M)$, we get the desired result.
\end{proof}

Before constructing an $H_X$-Ulrich line bundle on some $K3$ surface $X$ which covers an Enriques surface, we briefly explain why this problem is quite difficult. We denote by $\sF_{s+1}$ the moduli space of polarized $K3$ surfaces $\sF_{s+1}$ of degree $2s$. The \emph{Noether-Lefschetz locus}, defined as 
\[
\mathcal{NL}:= \{ (X, H_X) \ \vert \ \mbox{rk} (\pic(X))\ge 2\}
\]
is a countable union of divisors inside $\sF_{s+1}$.
When we fix the number $H_X^2 = 2s$, the locus
\[
\mathcal{NL}_{2s-1,3s}:= \{ (X, H_X) \ \vert \ \exists \ M \in \pic(X) \text{ with } H_X \cdot M = 3s, M^2 = 4s-4 \} \subset \sF_{s+1}
\]
is an irreducible component of $\mathcal{NL}$ (the subscript $2s-1$ stands for $\frac{1}{2}M^2+1$). Lemma \ref{Lemma:UlrichLineBundleCondition} and the semicontinuity of the Ulrich condition in flat families imply that the locus
\[
\mathcal{U} := \{ (X, H_X) \ \vert \ \exists \ H_X \text{-Ulrich line bundle } M \}
\]
is an open subset in $\mathcal{NL}_{2s-1,3s}$.  Note also that the locus of polarized $K3$ surfaces which cover Enriques surfaces can be described as
\[
\mathcal{K} := \{ (X, H_X) \ \vert \ \exists \ \theta : X \to X \text{ fixed-point-free involution such that } H_X \simeq \theta^* H_X \}
\]
which is a closed subset of large codimension (since the Picard number of $X$ is at least 10) in the moduli space of polarized $K3$ surfaces. Hence the problem reduces to finding one element which lies both in an open subset of a Noether-Lefschetz divisor and in a subvariety of large codimension of $\mathcal F_{s+1}$.

\medskip

However, for $s=4$, we are able to prove that the intersection is nonempty by constructing an explicit example of a $K3$ cover $X$. 

%%%

\begin{thm}
\label{Thm:ExistenceOfUlrichLineBundles}
When $s=4$, the intersection $\mathcal{U} \cap \mathcal{K} \subset \sF_5$ is nonempty, that is, there is a polarized $K3$ surface $(X, H_X)$ with $H_X^2 = 8$ which is a $K3$-cover of an Enriques surface $\sigma : X \to Y$ and carries an $(H_X=\sigma^* H_Y )$-Ulrich line bundle $M$ for some ample line bundle $H_Y$ on $Y$. Moreover, $M$ can be chosen to be the pull-back of an $H_Y$-Ulrich line bundle on $Y$.
\end{thm}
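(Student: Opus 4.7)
The plan is to produce a fully explicit example rather than a moduli-theoretic argument, since the target locus $\mathcal{U}\cap\mathcal{K}$ has large codimension and general-position arguments seem unlikely to work. First I would translate the Ulrich condition down to the Enriques quotient. If $M = \sigma^{*}N$ for some line bundle $N$ on $Y$, then by Riemann--Roch on $Y$ (using $K_{Y}$ numerically trivial and $\chi(\o_{Y})=1$), the equalities $\chi(N) = h^{0}(N) = 4$, $\chi(N-H_{Y})=\chi(N-2H_{Y})=0$ force the numerical data
\[
H_{Y}^{2}=4,\quad N^{2}=6,\quad N\cdot H_{Y}=6.
\]
These are exactly the pull-backs (up to a factor of $2$) of the numerics in Lemma \ref{Lemma:UlrichLineBundleCondition}, so it suffices to find an Enriques surface $Y$ with classes $H_{Y}, N$ satisfying the three intersection numbers above, with $H_{Y}$ ample and globally generated, and with both $N-H_{Y}$ and $2H_{Y}-N$ non-effective.

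Second, I would realize such a pair $(Y, H_Y)$ concretely by starting on the $K3$ side. By Keum's theorem, every algebraic Kummer surface is a $K3$-cover of an Enriques surface, so I would take $X$ to be a Kummer surface with sufficiently large Picard rank containing classes $H_{X}, M$ with $H_{X}^{2}=8$, $M^{2}=12$, $H_{X}\cdot M=12$, and both $\vartheta$-invariant under the Horikawa involution. By Horikawa's theorem, such classes automatically descend to $H_{Y}, N$ on $Y$. The Kummer model has the advantage that one can write down explicit generators of the Picard lattice in terms of the underlying abelian surface's divisors together with the $16$ exceptional $(-2)$-curves, which makes searching for candidates tractable.

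Third comes the verification, which is where I would rely on Macaulay2 as the paper indicates. After producing an explicit embedding of $X$ into some projective space (using a very ample multiple of $H_{X}$ if necessary), and explicit divisors representing $H_{X}$ and $M$, I would: (a) check ampleness and global generation of $H_{Y}$ on $Y$, which already gives the $4{:}1$ map to $\p^{2}$; (b) use cohomology computations to verify directly that $H^{0}(X,M(-H_{X}))=0$ and $H^{0}(X,2H_{X}-M)=0$, which by Lemma \ref{Lemma:UlrichLineBundleCondition} is equivalent to $M$ being $H_{X}$-Ulrich; and (c) confirm $\theta$-invariance of $M$ (or equivalently exhibit it as $\sigma^{*}N$). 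Once $M$ is Ulrich and $\theta$-invariant, the pushforward splits as $\sigma_{*}M = N \oplus (N\otimes K_{Y})$, and since $\sigma$ is \'etale each summand inherits the Ulrich vanishings from those of $M$ via the Leray spectral sequence, so $N$ itself is $H_{Y}$-Ulrich on $Y$.

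The main obstacle is the third step: the locus $\mathcal{K}$ has codimension at least $9$ inside $\mathcal{F}_{5}$ (the Picard rank must jump to at least $10$), so a random search for classes with the right intersection numbers is hopeless, and one has to exploit the rigidity of the Kummer construction to hand-pick divisors for which the two non-effectivity checks can actually be carried out. I expect the computationally delicate point to be excluding effectivity of $M-H_{X}$, because $M-H_{X}$ has self-intersection $-2$ and so lies on the borderline where Riemann--Roch alone gives no information; ruling out an effective representative requires either an explicit cohomology calculation on the projective model or a lattice-theoretic argument showing $M-H_{X}$ cannot be written as a sum of a $(-2)$-curve class plus an effective class.
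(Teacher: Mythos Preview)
Your proposal is correct and follows essentially the same strategy as the paper: take a Jacobian Kummer surface $X$ (so that Keum's theorem supplies the Enriques quotient and the fixed-point-free involution is explicit), write down a $\theta$-invariant class $M$ with $M^{2}=M\cdot H_{X}=12$, and verify the two non-effectivity conditions of Lemma~\ref{Lemma:UlrichLineBundleCondition} by Macaulay2; the splitting $\sigma_{*}M\cong N\oplus(N\otimes K_{Y})$ then gives the Ulrich line bundle on $Y$.

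Two remarks on implementation. First, a small slip: $(M-H_{X})^{2}=12-24+8=-4$, not $-2$ (this is forced by $\chi(M-H_{X})=0$ on a $K3$). Second, where you propose to compute on a projective model of $X$ via a very ample multiple of $H_{X}$, the paper instead works on the singular quartic model $\bar{X}=\varphi_{|L|}(X)\subset\p^{3}$: the candidate $M$ is chosen of the form $3L$ minus twelve specific nodes, so that $2H_{X}-M = L - (\text{four nodes})$ and the check $h^{0}(2H_{X}-M)=0$ becomes ``no hyperplane of $\p^{3}$ passes through those four nodes.'' For the other direction, since $M-H_{X}$ involves a half-sum of exceptional curves and does not descend to $\bar{X}$, the paper tests the stronger condition $|2(M-H_{X})|=\emptyset$, which translates (after absorbing the four positive $E_{ij}$'s, as $L|_{E_{ij}}$ is trivial) into ``no quadric of $\p^{3}$ passes through the twelve complementary nodes.'' This reduction to point-incidence conditions on a quartic in $\p^{3}$ is what makes the machine check immediate.
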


\begin{proof}
We proceed in two steps. In the first step, we %provide the basics for the general setup. 
place ourselves in a more general setup that permits the construction of a class of examples.
In the second step we find an explicit example, using Macaulay2.

\medskip

\emph{Step 1.} Our candidate $K3$ covers are Kummer surfaces $X$ associated to Jacobian abelian surfaces together with suitable polarizations and potential Ulrich line bundles.
Let ${C}$ be a general curve of genus 2, write $C\to \mathbb P^1$ as a double cover of the projective line and denote by $p_1,\ldots,p_6$ the Weierstrass points. They define sixteen theta--characteristics:
\[
[p_i],\ i=1,\ldots,6,\mbox{ the odd theta--characteristics, and}
\]
\[
[p_i+p_j-p_k],\ i,j,k = 1,\ldots,6\mbox{ mutually distinct, the even theta--characteristics}.
\]

The Jacobian $\mathcal A = J( C)$ is an Abelian surface with N{\'e}ron-Severi group $NS(\mathcal A) = \z \cdot [\Theta]$ with $\Theta^2 = 2$. 

The complete linear system $|2 \Theta|$ defines a morphism to $\p^3$ and it factors through the singular surface $\mathcal A/\iota$, where $\iota$ is the involution on $\mathcal A$ with 16 fixed points. This embeds $\mathcal A /\iota$ as a quartic hypersurface in $\p^3$ with 16 nodes. The Kummer surface $X = Km(\mathcal A)$ associated to $\mathcal A$ is the minimal desingularization of $\mathcal A/\iota$.  Let $L \in \pic(X)$ be a line bundle induced by the hyperplane section of the quartic surface $\mathcal A /\iota$, and let $E_1, \ldots, E_{16}$ be the 16 exceptional $(-2)$-curves on $X$ arising from the nodes of $\mathcal A /\iota$. By abusing the notation, the curves $E_i$ are usually called in literature \textit{nodes}, too.
We have $L^2 = 4, L \cdot E_i = 0$ and $E_i \cdot E_j = -2\delta_{ij} $. 

Beside the set of the nodes mentioned above, there is another set of sixteen $(-2)$--curves, called \emph{tropes} constructed from theta--characteristics, see, for example \cite[pag. 175]{Oha09}. Together with the nodes, they form a $(16)_6$ configuration.

%The lattice structure of the N{\'e}ron-Severi group of $X$ is well understood. We refer to \cite{Nar91, GS13} for more details on the construction and more properties.

We take an ample line bundle $H_X = 2L - \frac{1}{2} \sum_{i=1}^{16} E_i$. Note that $H_X$ induces a smooth projective model of $X$ as the complete intersection of 3 quadrics in $\p^5$ \cite[Theorem 2.5]{Shi77}, \cite[Section 5.1]{GS13}. By choosing suitable coordinates, we may write $C$ as
\[
y^2 = \prod_{j=0}^{5} (x - s_j)
\]
for some 6-tuple of pairwise distinct numbers $s_j \in \c$. Then its projective model $\varphi_{H} : X \hookrightarrow \p^5$ is defined by the equations
\[
{\setlength\arraycolsep{2pt}
\left\{ \begin{array}{rcrcrcrcrcrc}
z_0^2 & + & z_1 ^2 & + & z_2^2 & + & z_3^2 & + & z_4^2 & + & z_5^2 & =0 \\
s_0z_0^2 & + & s_1z_1 ^2 & + & s_2z_2^2 & + & s_3z_3^2 & + & s_4z_4^2 & + & s_5z_5^2 & =0 \\
s_0^2 z_0^2 & + & s_1^2 z_1 ^2 & + & s_2^2 z_2^2 & + & s_3^2 z_3^2 & + & s_4^2 z_4^2 & + & s_5^2 z_5^2 & =0 \\
\end{array}
\right.}
\]
in $\p^5$ \cite[Theorem 2.5]{Shi77}. Note that there are ten fixed-point-free involutions given by changing the sign of three coordinates, for example
\[
\theta : (z_0, z_1, z_2, z_3, z_4, z_5) \mapsto (-z_0, -z_1, -z_2, z_3, z_4, z_5),
\] 
and these involutions correspond to the ten even theta-characteristics \cite[p. 233]{Mu12}, \cite[p. 166]{Oha09}. Reordering the Weierstrass points if necessary, we may assume that the particular involution $\theta$  which  changes the signs of the first three coordinates corresponds to the theta--characteristic $\beta = [p_4+p_5-p_6]$.

The involution $\theta$ defined above induces the covering map over an Enriques surface $\sigma : X \to Y = X/\theta$. We can easily check that $H_X$ is $\theta$-invariant as follows. Note that $H_X$ can be represented as a hyperplane divisor of $X \subset \p^5$. For instance, we take the hyperplane section $Z:= \{z_0 = 0\} \cap \varphi_{H_X}(X) \subset \p^5$ and see immediately that $Z$ is $\theta$-invariant. In other words, $c_1(H_X)$ lies on $\theta$-invariant lattice $\Lambda^{+} \subset H^2(X, \z)$. Therefore, Horikawa's theorem implies $H_X = \sigma^* H_Y$ for some line bundle $H_Y$ on~$Y$.

Since $H_X = \sigma^* H_Y$ is ample, we see that $H_Y$ is also ample by Nakai-Moishezon criterion. By Riemann-Roch formula, we have $h^0 (Y, H_Y) = h^0 (Y, K_Y + H_Y) = 3$, which means that $H_Y$ gives rise to a 4-fold cover of $\p^2$.  

Following \cite{Oha09}, we relabel the nodes by the corresponding 2--torsion points in $\mathcal A$: 
\begin{eqnarray*}
E_0 & = & \text{ node corresponding to } [0] \in \mathcal{A}; \\
E_{ij}=E_{[p_i - p_j]} & = & \text{ node corresponding to } [p_i - p_j] \in \mathcal{A}, 1 \le i < j \le 6.
\end{eqnarray*}
The tropes are labelled using their associated theta--characteristics \cite{Oha09}, e.g. $T_i=T_{[p_i]}$ corresponds to $[p_i]$ and $T_{ijk}=T_{[p_i+p_j-p_k]}$ corresponds to $[p_i+p_j-p_k]$ for any $i<j<k$. Obviously, if $\{i,j,k\}\cup \{\ell, m,n\}=\{1,\ldots,6\}$ then $T_{ijk}=T_{\ell m n}$.

Since the fixed-point-free involution $\theta$ is a ``switch'' induced by the even theta characteristic $\beta = [p_4+p_5-p_6]$, it swaps the nodes $E_{\alpha}$ and the tropes $T_{\alpha+\beta}$ in the following way, \cite[Section 4, Section 5]{Oha09}:

\[
\begin{array}{ccc|ccc}
\hline
\text{Nodes} & \ & \text{Tropes} &  \text{Nodes} & \ & \text{Tropes} \\
\hline
E_0 & \leftrightarrow & T_{456}  & E_{25} & \leftrightarrow & T_{246} \\
E_{12} & \leftrightarrow & T_{3} &  E_{26} & \leftrightarrow & T_{136} \\
E_{13} & \leftrightarrow & T_{2} & E_{34} & \leftrightarrow & T_{356} \\
E_{14} & \leftrightarrow & T_{156} & E_{35} & \leftrightarrow & T_{346} \\
E_{15} & \leftrightarrow & T_{146} & E_{36} & \leftrightarrow & T_{126} \\
E_{16} & \leftrightarrow & T_{236} & E_{45} & \leftrightarrow & T_{6} \\
E_{23} & \leftrightarrow & T_{1} & E_{46} & \leftrightarrow & T_{5} \\
E_{24} & \leftrightarrow & T_{256} & E_{56} & \leftrightarrow & T_{4} \\
\hline
\end{array}
\]
\\
where the corresponding tropes are computed by (see \cite[Lemma 4.1]{Oha09})

\begin{eqnarray*}
T_i & = & \frac{1}{2} (L - E_0 - \sum_{k \neq i} E_{ik})
\end{eqnarray*}
for $1\le i\le 6$ and 
\begin{eqnarray*}
T_{ij6} & = & \frac{1}{2} (L - E_{i6} - E_{j6} - E_{ij} - E_{\ell m}-E_{mn}-E_{\ell n})
\end{eqnarray*}
for $1 \le i < j \le 5$, where $\{l,m,n \}$ is the complement of $\{i, j\}$ in $\{1, 2, 3, 4, 5\}$.

Note that, since
\[
 L = 2T_6 + E_0 + E_{16} + E_{26} + E_{36} + E_{46} + E_{56},
\] 
we obtain the formula
\begin{equation}
\label{eqn:theta*L}
\theta^*(L)=3L-E_0-\sum E_{ij}.
\end{equation}

Put $M = 3L - (E_0 + E_{16} + E_{26} + E_{36} + E_{46} + E_{56} + E_{12}+E_{13}+E_{14}+E_{15} + E_{24} + E_{35})$. A direct computation using (\ref{eqn:theta*L}) shows that $\theta^*M = L + T_6 + T_1 + T_{246}+T_{356} = M$, that is, $M$ is invariant under $\theta^*$. Hence, we conclude that $M = \sigma^* N$ for some line bundle $N$ on $Y$, and $F=\sigma_* (M) = N \oplus (N \otimes K_Y)$. 

Remark that $M \cdot H_X = M^2 = 12$. Hence, in view of Lemma \ref{Lemma:UlrichLineBundleCondition}, this particular line bundle $M$ is Ulrich if and only if the divisors $M-H_X$ and $2H_X - M$ are not effective.

\medskip 

\emph{Step 2.}
Using Macaulay2, see \cite {GS}, we provide an example of a polarized $K3$ cover as above, with $M-H_X$ and $2H_X - M$ non--effective.  We take the explicit equation for the Kummer quartic surface in $\mathbb{P}^3$ for a genus 2 curve from \cite[Section 2]{Fly93}. We also refer \cite[Section 4]{Kum08} for more analysis on nodes and tropes. Let $C$ be the hyperelliptic curve given by the equation $y^2 = (x-1)(x+1)(x-2)(x+2)(x-3)(x+3)$. The corresponding equation which gives a Kummer quartic with 16 nodes is the following, \cite[Section 4.2]{Kum08}:

\bigskip

\begin{verbatim}
Macaulay2, version 1.8.2
with packages: ConwayPolynomials, Elimination, IntegralClosure,
               LLLBases, PrimaryDecomposition, ReesAlgebra,
               TangentCone

i1 : S=ZZ/32003[X,Y,Z,W];
i2 : f=7056*X^4-2016*X^2*Y^2+144*Y^4-288*X*Y^2*Z+2888*X^2*Z^2
-196*Y^2*Z^2+56*Z^4+144*X^3*W-196*X^2*Z*W+56*X*Z^2*W-4*Z^3*W
+Y^2*W^2-4*X*Z*W^2; 
i3 : I=ideal f;
o3 : Ideal of S
\end{verbatim}

\bigskip

We can easily verify that it is a singular surface with 16 distinct nodes as follows.

\bigskip

\begin{verbatim}
i4 : NODES=ideal singularLocus Proj (S/I);
o4 : Ideal of S
i5 : codim NODES
o5 = 3
i6 : degree NODES
o6 = 16
\end{verbatim}

\bigskip

We are interested in the vanishing $H^0(2H_X - M) = H^0 (L - E_{23} - E_{25}-E_{34} - E_{45}) = 0$. To compute the cohomology $H^0$ passing by the map $\phi_{|L|} : X \to \p^3$, we need to pick 4 nodes in the image corresponding to $E_{23}, E_{25}, E_{34}, E_{45}$. Following the computations in \cite[Section 4.2]{Kum08}, we have 4 points in $\p^3$
\begin{eqnarray*}
p_{23} & = & (1:1:-2:-44) \\
p_{25} & = & (1:2:-3:-42) \\
p_{34} & = & (1:0:-4:-65) \\
p_{45} & = & (1:1:-6:-84).
\end{eqnarray*}
which are 4 nodes of the Kummer quartic $\bar{X} = V(f)$.

Let $J1$ be the ideal for 4 nodes $\{ p_{23}, p_{25},p_{34}, p_{45} \}$, and $J2$ be the ideal for complementary 12 nodes. We chose the ideal manually among the minimal prime ideals to reduce hand-written computations. For  practical reasons, we consider also some intermediate saturation processes. 

\bigskip

\begin{verbatim}
i7  : LIST=minimalPrimes NODES
o7  = {ideal (Z, Y, X), ideal (- 14238Z + W, Y, X + 3556Z),
     ----------------------------------------------------------
     ideal (- 8017Z + W, Y, X + 8001Z), ideal (- 50Z + W, Y, X
     ----------------------------------------------------------
     + Z), ideal (- 14Z + W, Y + 16000Z, X + 16001Z), ideal (-
     ----------------------------------------------------------
     14Z + W, Y - 16000Z, X + 16001Z), ideal (- 14Z + W, Y +
     ----------------------------------------------------------
     10667Z, X + 10668Z), ideal (- 14Z + W, Y - 10667Z, X +
     ----------------------------------------------------------
     10668Z), ideal (- 14Z + W, Y + 5334Z, X + 5334Z), ideal (-
     ----------------------------------------------------------
     14Z + W, Y - 5334Z, X + 5334Z), ideal (- 2Z + W, Y -
     ----------------------------------------------------------
     10669Z, X - 10668Z), ideal (- 2Z + W, Y + 10669Z, X -
     ----------------------------------------------------------
     10668Z), ideal (10Z + W, Y - 5333Z, X - 5334Z), ideal (10Z
     ----------------------------------------------------------
     + W, Y + 5333Z, X - 5334Z), ideal (- 22Z + W, Y + 16001Z,
     ----------------------------------------------------------
     X - 16001Z), ideal (- 22Z + W, Y - 16001Z, X - 16001Z)}
o7  : List
i8  : Ip23=LIST_15
o8  = ideal (- 22Z + W, Y - 16001Z, X - 16001Z)
o8  : Ideal of S
i9  : Ip25=LIST_5
o9  = ideal (- 14Z + W, Y - 16000Z, X + 16001Z)
o9  : Ideal of S
i10 : Ip34=LIST_2
o10 = ideal (- 8017Z + W, Y, X + 8001Z)
o10 : Ideal of S
i11 : Ip45=LIST_8
o11 = ideal (- 14Z + W, Y + 5334Z, X + 5334Z)
o11 : Ideal of S
i12 : J1=saturate(Ip23*Ip25*Ip34*Ip45);
o12 : Ideal of S
i13 : Temp1=saturate(LIST_0*LIST_1*LIST_3*LIST_4*LIST_6*LIST_7);
o13 : Ideal of S
i14 : Temp2=saturate(LIST_9*LIST_10*LIST_11*LIST_12*LIST_13*LIST_14);
o14 : Ideal of S
i15 : J2=saturate(Temp1*Temp2);
o15 : Ideal of S
\end{verbatim}

\bigskip

Now, the element in $|2H_X - M| = |L - E_{23} - E_{25} - E_{34} - E_{45}|$ corresponds to a hyperplane section passing through $p_{23}, p_{25}, p_{34}, p_{45}$, and we can check that there is no such a hyperplane section:

\bigskip

\begin{verbatim}
i16 : HH^0(sheaf(S^{1}**module(J1)))
o16 = 0
        ZZ
o16 : ------module
      32003
\end{verbatim}

\bigskip

Similarly, if $|M-H_X|$ is nonempty, then $|2(M-H_X) | = |2L + E_{23}+E_{25}+E_{34}+E_{45} - (E_0+E_{16}+E_{26}+E_{36}+E_{46}+E_{56}+E_{12}+E_{13}+E_{14}+E_{15}+E_{24}+E_{35})|$ is also nonempty. Note that, since $L|_{E_{ij}}\cong \mathcal O_{E_{ij}}$ and $\mathcal O_{E_{ij}}(E_{ij})\cong \mathcal O_{E_{ij}}(-2)$, we obtain immediately an isomorphism
\[
H^0(2L)\cong H^0(2L+E_{23}+E_{25}+E_{34}+E_{45})
\]
and hence we can identify $|2L + E_{23}+E_{25}+E_{34}+E_{45} - (E_0+E_{16}+E_{26}+E_{36}+E_{46}+E_{56}+E_{12}+E_{13}+E_{14}+E_{15}+E_{24}+E_{35})|$ with $|2L -(E_0+E_{16}+E_{26}+E_{36}+E_{46}+E_{56}+E_{12}+E_{13}+E_{14}+E_{15}+E_{24}+E_{35})|$. Via the map $\varphi_{|L|}$ to $\mathbb{P}^3$, an element in this linear system corresponds to a quadric hypersurface passing through the 12 complementary nodes to $\{p_{23},p_{25},p_{34},p_{45}\}$. Macaulay2 computation shows however that

\bigskip

\begin{verbatim}
i17 : HH^0(sheaf(S^{2}**module(J2)))
o17 = 0
        ZZ
o17 : ------module
      32003
\end{verbatim}

\bigskip

\noindent
i.e. there is no such a quadric section.

\medskip

\emph{Conclusion.}
For the example found in the second step, since $M$ is $H_X$--Ulrich, it follows that $F$ is $H_Y$--Ulrich, and hence the direct summand $N$ is an $H_Y$--Ulrich line bundle as well.
\end{proof}

In what follows, we prove the existence of Ulrich line bundles for general Enriques surfaces with a polarization of degree four. We fix some notation. Let $h\in U\oplus E_8(-1)$ be the numerical class of the polarization $H_Y$ constructed above, $\mathcal M_{En}^0$ be the 10--dimensional moduli space of Enriques surfaces \cite{Hor78, Nam85, GH16}, and $\mathcal M_{En,h}^0$ be the moduli space of Enriques surfaces with a polarization of type $h$, \cite{GH16}. It is also 10--dimensional and irreducible, and there is a natural forgetful morphism $\varphi:\mathcal M_{En,h}^0\to \mathcal M_{En}^0$ obtained from the descriptions of the two moduli spaces as (open subsets of) quotients of the same bounded domain, \cite[pag. 59, 61]{GH16}. We prove:

\begin{cor}
\label{cor:general}
A general polarized Enriques surface $(Y, H_Y)\in\mathcal M_{En,h}^0$ carries an $H_Y$-Ulrich line bundle.
\end{cor}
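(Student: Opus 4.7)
My plan is to deduce the corollary from Theorem \ref{Thm:ExistenceOfUlrichLineBundles} by a standard openness argument in a versal deformation of $(Y_0,H_{Y_0})$, where $(Y_0,H_{Y_0},N_0)$ denotes the triple of Enriques surface, polarization, and $H_{Y_0}$-Ulrich line bundle produced in the main theorem. A preliminary calculation records the numerical invariants: since $\sigma : X\to Y_0$ is {\'e}tale of degree two and $M=\sigma^*N_0$ satisfies $M^2=M\cdot H_X=12$ with $H_X=\sigma^*H_{Y_0}$, one reads off $N_0^2=N_0\cdot H_{Y_0}=6$, so $[N_0]$ is a definite class in the N{\'e}ron--Severi lattice of $Y_0$ compatible with the polarization type $h$.

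I would then take a versal deformation $\pi : (\mathcal Y,\mathcal H)\to (T,t_0)$ of the polarized pair $(Y_0,H_{Y_0})$ with smooth irreducible base $T$ of dimension $10$, so that, since $\mathcal M_{En,h}^0$ is irreducible of the same dimension, the classifying morphism $T\to \mathcal M_{En,h}^0$ is dominant. The next task is to spread $N_0$ across $T$. Because $H^1(Y_t,\mathcal O_{Y_t})=0$ for every Enriques fiber, the relative Picard scheme $\underline{\pic}(\mathcal Y/T)$ has trivial connected component and is therefore {\'e}tale over $T$; after replacing $T$ by a connected {\'e}tale neighborhood of $t_0$ if necessary, $N_0$ extends to a line bundle $\mathcal N$ on $\mathcal Y$ restricting to $N_0$ on the central fiber $Y_{t_0}$.

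Finally I would invoke semicontinuity. Each of the four vanishings
\[
h^0(\mathcal N_t-\mathcal H_t)=h^1(\mathcal N_t-\mathcal H_t)=h^1(\mathcal N_t-2\mathcal H_t)=h^2(\mathcal N_t-2\mathcal H_t)=0
\]
characterising the $\mathcal H_t$-Ulrich property of $\mathcal N_t$ is upper semicontinuous in $t\in T$ by Grauert's theorem, so together they cut out an open subset $U\subset T$. By Theorem \ref{Thm:ExistenceOfUlrichLineBundles} the base point $t_0$ lies in $U$, hence $U$ is non-empty and therefore dense in the irreducible $T$. Its image under the dominant map $T\to\mathcal M_{En,h}^0$ is then a non-empty constructible subset of the irreducible moduli space containing a Zariski open subset, and is consequently dense, which yields the corollary.

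The step I expect to be most delicate is the extendability of $N_0$ along $T$: the analogous statement for a $K3$ surface would fail off a Noether--Lefschetz locus, but in the Enriques setting the vanishing of $\pic^0$ makes $\underline{\pic}(\mathcal Y/T)$ {\'e}tale over the base, and the extension of a specified line bundle becomes essentially formal once an {\'e}tale base change of $T$ is permitted.
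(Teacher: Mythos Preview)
Your argument is correct and reaches the same conclusion as the paper by essentially the same mechanism---openness of the Ulrich condition combined with irreducibility of the moduli space---but the two proofs organise the key step, the spreading of $N_0$, differently. The paper works globally on $\mathcal M_{En,h}^0$: it introduces the closed locus $\mathcal{NL}_{En,h}$ of pairs admitting a line bundle $N$ with $N^2=N\cdot H_Y=6$, observes that it dominates $\mathcal M_{En}^0$ under the forgetful map (since on every Enriques surface the numerical class $\eta$ is realised), and concludes by a dimension count that $\mathcal{NL}_{En,h}=\mathcal M_{En,h}^0$; then $\mathcal U_{En,h}$ is a nonempty open subset of the whole moduli space. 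You instead work locally: you take a versal family and use that $\underline{\pic}(\mathcal Y/T)$ is \'etale to extend $N_0$ directly, then apply semicontinuity on $T$. Your route is arguably cleaner and makes the underlying reason---that Enriques surfaces, unlike $K3$'s, have no Noether--Lefschetz phenomenon---fully explicit; the paper's route avoids deformation theory of line bundles but pays with an indirect dominance/dimension argument.

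One small technical point: the vanishing $H^1(Y_t,\mathcal O_{Y_t})=0$ alone only gives that $\underline{\pic}(\mathcal Y/T)$ is unramified (trivial connected component); to conclude it is \'etale you also need smoothness, i.e.\ the vanishing of the obstruction space $H^2(Y_t,\mathcal O_{Y_t})$. Of course $p_g=0$ for Enriques surfaces, so this holds, but you should invoke it explicitly.
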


\begin{proof}
In Theorem \ref{Thm:ExistenceOfUlrichLineBundles} we constructed a polarized Enriques surface $(Y, H_Y)$ of degree four and an $H_Y$-Ulrich line bundle $N$ on it.  Note that $N$ satisfies numerical conditions $N^2 = N \cdot H_Y = 6$ since $\chi(N-H_Y) = \chi(N-2H_Y)=0$. Denote by $\eta \in U\oplus E_8(-1)$ its numerical class.

We claim that a general Enriques surface $(Y, H_Y)\in \mathcal M_{En,h}^0$ has an Ulrich line bundle. Consider the locus 
$$
\mathcal{U}_{En,h}:=\{ (Y, H_Y) \ | \ \exists \text{ an } H_Y\text{-Ulrich line bundle } N\}
$$ 
inside the moduli space $\mathcal{M}_{En,h}^0$ of polarized Enriques surfaces of degree four. Since Ulrich conditions are open in flat families, it is an open subset of the locus 
$$
\mathcal{NL}_{En,h} := \{ (Y, H_Y) \in\mathcal M_{En,h}^0 \ | \ \exists \text{ a line bundle } N \text{ such that } N^2 = N \cdot H_Y = 6 \}.
$$ 
We claim that $\mathcal{NL}_{En,h}$ coincides with the whole space $\mathcal{M}_{En,h}^0$. If it happens, then $\mathcal{U}_{En,h}$ is a nonempty open subset of $\mathcal{M}_{En,h}^0$, hence, a general polarized Enriques surface carries an Ulrich line bundle.

Note that for any Enriques surface $Y$, there exists a polarization $H$ of numerical class $h$ and a line bundle $N$ such that $H^2 = 4$, $N^2 = N \cdot H = 6$. Indeed, any line bundle $N$ of numerical class $\eta$ satisfies this condition. Via the surjective morphism $\varphi : \mathcal{M}_{En,h}^0 \to \mathcal{M}_{En}^0$, $\mathcal{NL}_{En,h}$ is dominant over $\mathcal{M}_{En}^0$ . Since $\mathcal{M}_{En,h}^0$ is an irreducible variety of dimension 10 and $\mathcal{NL}_{En,h} \subseteq \mathcal{M}_{En,h}^0$ is a closed algebraic subset which dominates $\mathcal{M}_{En}^0$ via the map $\varphi$, we conclude that $\dim \mathcal{NL}_{En,h} = 10$ and hence $\mathcal{NL}_{En,h}$ and $ \mathcal{M}_{En,h}^0$ coincide. 
\end{proof}

\begin{rem}\label{Rem:NonUlrichLineBundle}
Finding an $H_X$-Ulrich line bundle $M$ for an arbitrary $X$ is not a simple question. Indeed, there is a line bundle $M$ with $H_X \cdot M = M^2 = 12$ which is not Ulrich. Suppose that 8 exceptional curves $E_{i_1}, \ldots, E_{i_8}$ forms an \emph{even eight}, that is, $\sum_{k=1}^8 E_{i_k}$ is divisible by 2 in $\pic(X)$. Then $M = 2L - \frac{1}{2} \sum_{k=1}^8 E_{i_k}$ satisfies the numerical conditions in Lemma \ref{Lemma:UlrichLineBundleCondition}. However, we can check it directly that $M$ cannot be Ulrich. We have $M-H_X = \frac{1}{2} \sum_{k=1}^8 E_{j_k}$, where the index set $\{j_1, \ldots, j_8\}$ is the complementary set of $\{i_1, \ldots, i_8\}$ in $\{1, 2, \ldots, 16\}$. By Nikulin \cite[Corollary 5]{Nik75}, the set $\{j_1, \ldots, j_8\}$ also induces an even eight, so $M-H_X$ is effective and $M$ is not Ulrich.
\end{rem}

\begin{rem}
Corollary \ref{cor:general} can be accounted an evidence of Borisov-Nuer conjecture, even though the authors formulated it for unnodal Enriques surfaces of degree $\ge$ 10.
\end{rem}

\begin{ack}
We are indebted to the anonymous referee for their useful suggestions to improve the presentation.
The second author thanks Yongnam Lee and Alessandra Sarti for helpful discussions. The authors thank the Max Planck Institut f\"ur Mathematik in Bonn for hospitality during the preparation of this work. Marian Aprodu was partly funded by an UEFISCDI grant.
Yeongrak Kim was supported by Basic Science Research Program through the National Research Foundation of Korea funded by the Ministry of Education (NRF-2016R1A6A3A03008745). 
\end{ack}

%\bibliography{refs-main/refs}
\def\cprime{$'$} \def\cprime{$'$} \def\cprime{$'$} \def\cprime{$'$}
  \def\cprime{$'$} \def\cprime{$'$} \def\dbar{\leavevmode\hbox to
  0pt{\hskip.2ex \accent"16\hss}d} \def\cprime{$'$} \def\cprime{$'$}
  \def\polhk#1{\setbox0=\hbox{#1}{\ooalign{\hidewidth
  \lower1.5ex\hbox{`}\hidewidth\crcr\unhbox0}}} \def\cprime{$'$}
  \def\cprime{$'$} \def\cprime{$'$} \def\cprime{$'$}
  \def\polhk#1{\setbox0=\hbox{#1}{\ooalign{\hidewidth
  \lower1.5ex\hbox{`}\hidewidth\crcr\unhbox0}}} \def\cdprime{$''$}
  \def\cprime{$'$} \def\cprime{$'$} \def\cprime{$'$} \def\cprime{$'$}
\providecommand{\bysame}{\leavevmode\hbox to3em{\hrulefill}\thinspace}
\providecommand{\MR}{\relax\ifhmode\unskip\space\fi MR }
% \MRhref is called by the amsart/book/proc definition of \MR.
\providecommand{\MRhref}[2]{%
  \href{http://www.ams.org/mathscinet-getitem?mr=#1}{#2}
}
\providecommand{\href}[2]{#2}

\end{document}